\numberwithin{equation}{section}
\theoremstyle{plain}
\newtheorem{thm}[equation]{Theorem}
\newtheorem*{thm*}{Theorem}
\newtheorem{prop}[equation]{Proposition}
\newtheorem{cor}[equation]{Corollary}       
\newtheorem{lem}[equation]{Lemma}
\theoremstyle{definition} 
\newtheorem{defn}[equation]{Definition} 
\newtheorem{ex}[equation]{Example}
\newtheorem{rem}[equation]{Remark}
\newcommand{\mrm}{\mathrm}
\renewcommand{\t}{\text}
\newcommand{\ti}{\textit}
\newcommand{\tb}{\textbf}
\newcommand{\Mod}[1]{\mrm{Mod}(#1)}
\renewcommand{\mod}[1]{\mrm{mod}(#1)}
\newcommand{\mc}{\mathcal}
\newcommand{\Hom}{\mathrm{Hom}}
\newcommand{\rlim}{\varinjlim}
\newcommand{\llim}{\varprojlim}
\newcommand{\mf}[1]{\mathfrak{#1}}
\newcommand{\cm}[1]{\mathsf{CM}(#1)}
\newcommand{\lcm}[1]{\rlim\cm{#1}}
\newcommand{\bmat}{\begin{matrix}}
\newcommand{\emat}{\end{matrix}}
\newcommand{\spec}[1]{\mrm{Spec}(#1)}
\newcommand{\pspec}[1]{\mrm{pSpec}(#1)}
\newcommand{\mbb}[1]{\mathbb{#1}}
\newcommand{\qz}{\mbb{Q}/\mbb{Z}}
\begin{document}

\title{Cotilting with Balanced Big Cohen-Macaulay modules}
\author{Isaac Bird}
\address{Department of Algebra, Faculty of Mathematics and Physics, Charles University in Prague, Sokolovsk\'{a} 83, 186 75 Praha, Czech Republic}
\email{bird@karlin.mff.cuni.cz}

\maketitle

\begin{abstract}
Over $d$-dimensional Cohen-Macaulay rings with a canonical module, $d$-cotilting classes containing the maximal and balanced big Cohen-Macaulay modules are classified. Particular emphasis is paid to the direct limit closure of the balanced big Cohen-Macaulay modules, and the class of modules of depth $d$, which are shown to respectively be the smallest and largest such cotilting classes. Considerations are then given to the interplay between local cohomology, canonical duality and cotilting modules for the class of Gorenstein flat modules over Gorenstein local rings.
\end{abstract}

\section{Introduction}
Over Cohen-Macaulay local rings admitting canonical modules, a classic result of Auslander and Buchweitz shows that every finitely generated module admits a maximal Cohen-Macaulay cover and an envelope by a finitely generated module of finite injective dimension (see \cite{lw} for a modern exposition). In \cite{holm} Holm proved that, over the same rings, this result extends beyond the finitely generated case: there is a perfect hereditary cotorsion pair $(\lcm{R},\lcm{R}^{\perp})$ in the category of all modules, meaning that every $R$-module has a cover by $\lcm{R}$, the class of modules obtained as the direct limit closure of the maximal Cohen-Macaulay modules, $\cm{R}$. 

Using these results, it was shown in \cite{bird} that the class $\lcm{R}$ is the smallest definable class containing the maximal Cohen-Macaulay modules. This class was then contrasted to the class $\mc{D}$, consisting of all modules of depth at least equal to $\t{dim}\,R$, which is another definable class whose finitely generated modules are the maximal Cohen-Macaulay modules. It was also shown that over any Cohen-Macaulay ring $(\mc{D},\mc{D}^{\perp})$ is a perfect hereditary cotorsion pair.

In this paper further properties of these cotorsion pairs are investigated, particularly from the perspective of cotilting classes. These are the classes which appear as the left hand side of the cotorsion pair $(\,^{\perp_{\infty}}C,(\,^{\perp_{\infty}}C)^{\perp_{\infty}})$ for a cotilting module $C$, referred to as the cotilting class induced by $C$. The classification of cotilting classes over commutative noetherian rings was given in \cite{apst},  establishing a bijection between such classes and specific chains of subsets of the prime spectrum. This line of investigation, as well as the title of the paper, is justified by the first main result.

\begin{thm*}[\cref{bbcmcotilting}]
	Let $R$ be a $d$-dimensional Cohen-Macaulay ring with a canonical module. Then the class $\lcm{R}$ is $d$-cotilting. Moreover, it is the smallest $d$-cotilting class containing the balanced big Cohen-Macaulay modules, and every cotilting module inducing $\lcm{R}$ is a balanced big Cohen-Macaulay module.
\end{thm*}

Subsequently, an explicit description of the corresponding subsets of $\spec{R}$ is given. As a corollary we obtain a complete description of the associated primes of the balanced big Cohen-Macaulay modules, and all of their cosyzygies, which is a partial extension of a result of Sharp. As in \cite{bird}, the comparison is made to $\mc{D}$, and the following complementary result is obtained.

\begin{thm*}[\cref{depthcotilting}]
	Let $R$ be a $d$-dimensional Cohen-Macaulay ring. Then $\mc{D}$, the class of modules of depth at least $d$, is $d$-cotilting. Moreover, it is the largest $d$-cotilting class (containing the balanced big Cohen-Macaulay modules). Every cotilting module inducing $\mc{D}$ has finite depth, equal to $d$.
\end{thm*}

By comparing the two theorems,  a complete characterisation of cotilting classes containing $\cm{R}$ and the balanced big Cohen-Macaulay modules in terms of chains of subsets of $\spec{R}$ is provided. As a consequence, it can be seen that whenever $R$ has a canonical module and dimension $d>1$, there are an abundance of cotilting classes between $\lcm{R}$ and $\mc{D}$, and all such classes are definable. This further reinforces the difference between the two as discussed in \cite{bird}.

In the final section, attention is given to the case when the ring itself is $d$-Gorenstein, where $\lcm{R}$ coincides with the class of Gorenstein flat modules. Particular phenomena occur within this setting. For example, in \cref{flatcotilting} an explicit flat cotilting module is constructed, whose associated cotilting class is the class of Gorenstein flat modules. Following this, attention is paid to functors which naturally appear in the study of maximal Cohen-Macaulay modules: canonical duality and local cohomology. This cumulates in the following theorem.

\begin{thm*}[\cref{cancot}]
	Let $R$ be a $d$-dimensional complete Gorenstein local ring. If $C$ is a cotilting module inducing $\lcm{R}$, then $\Hom_{R}(C,R)$ is a partial $d$-cotilting module with a complement. Moreover $^{\perp_{\infty}}\Hom_{R}(C,R)=\,^{\perp_{\infty}}C=\lcm{R}$.
\end{thm*}

\subsection*{Acknowledgements}
I am grateful to Mike Prest for his feedback on drafts of this work, and to Tsutomu Nakamura for comments on a previous version. I would also like to thank the referee for their helpful comments, suggestions and improvements. This research was carried out at the University of Manchester, and Univerzita Karlova under the grant 20-02760Y.

\section{Preliminaries}
Let us first establish some notation: for a ring $R$, let $\Mod{R}$ denote the class of all left $R$-modules, and associate right $R$-modules with left modules over the opposite ring $R^{\circ}$; by $\mod{R}$ we mean the class of finitely presented left $R$-modules. The class of modules of injective dimension at most $n$ is denoted by $\mc{I}_{n}$, and similarly $\mc{P}_{n}$ denotes the class of modules of projective dimension at most $n$. The modules of finite injective dimension will be denoted by $\mc{I}_{<\infty}$, and similarly define $\mc{P}_{<\infty}$ for the projective case. 

Given a class of modules $\mc{A}$,  its $i$-th right Ext-orthogonal class, where $1\leq i<\omega$, is 
\[
\mc{A}^{\perp_{i}}:=\{M\in\Mod{R}:\t{Ext}_{R}^{i}(A,M)=0 \t{ for all }A\in\mc{A}\}
\]
and its left Ext-orthogonal class, $^{\perp_{i}}\mc{A}$, is similarly defined. A pair of modules $(\mc{A},\mc{B})$ is called a \ti{cotorsion pair} if $\mc{B}=\mc{A}^{\perp_{1}}$ and $\mc{A}=\,^{\perp_{1}}B$. We let $\mc{A}^{\perp_{\infty}}=\cap_{i\geq 1}\mc{A}^{\perp_{i}}$ and similarly define $\,^{\perp_{\infty}}\mc{A}$. Given a class $\mc{A}$, the class $\t{Add}(\mc{A})$ consists of all summands of coproducts of objects in $\mc{A}$, while $\t{Prod}(\mc{A})$ consists of all summands of products of objects in $\mc{A}$. Recall that a class $\mc{A}$ is \ti{resolving} if it is extension closed, contains $\mc{P}_{0}$, and if $0\to A\to B\to C\to 0$ is a short exact sequence with $B$ and $C$ in $\mc{A}$, then $A\in\mc{A}$ as well.

We now recall some key notions concerning purity. A submodule (or equivalently an embedding) $0\to L\to M$ of $R$-modules is called \ti{pure} if for every finitely presented $R$-module $A$, the induced map $\Hom_{R}(A,M)\to\Hom_{R}(A,M/L)$ is surjective in $\tb{Ab}$, the category of Abelian groups. There are several equivalent definitions of a pure embedding, as can be seen in \cite[6.4]{JL}. A class of $R$-modules is said to be \ti{definable} if it is closed under direct limits, direct products and pure submodules. Equivalently, definable classes are precisely those given by the modules that vanish on sets of functors that arise as cokernels of $\Hom(-,\varphi_{i})$ for sets of maps $\{\varphi_{i}:A_{i}\to B_{i}\}\subset\mod{R}$. Such functors are called \ti{finitely presented}, and a detailed exposition around them can be found in \cite{PSL}.

Given any class $\mc{C}$ of modules, one can form its definable closure $\langle \mc{C} \rangle$, which is the closure of $\mc{C}$ under direct limits, direct products and pure submodules. If $\mc{C}\subset \mod{R}$, then, see for example \cite[2.13]{gt}, the direct limit closure $\rlim \mc{C}$ is always closed under pure submodules and direct limits, but closure under direct products is more elusive. A classic result \cite[4.3]{CB}, which actually holds in any finitely accessible category with products, states that $\rlim\mc{C}$ is closed under products if and only if $\mc{C}$ is \ti{pre-enveloping} in $\mod{R}$, that is for every $M\in\mod{R}$ there is an object $C\in \mc{C}$ with a morphism $\phi:M\to C$ such that any other morphism $M\to C'$, with $C'\in \mc{C}$, factors through $\phi$.

\begin{ex}
The following example will be used throughout. An $R$-module $M$ is called \ti{Gorenstein flat} if there is an acyclic complex $\bm{F}$ of flat $R$-modules with $M=Z_{0}\bm{F}$ such that $E\otimes_{R}\bm{F}$ is acyclic for all $E\in\mc{I}_{0}(R^{\circ})$. The class of Gorenstein flat $R$-modules will be denoted $\mc{GF}$. Over any ring $\mc{GF}$ is closed under coproducts, while if $R$ is right coherent it is also closed under pure submodules by \cite[2.5]{hj}. It has been shown in \cite[4.13]{ss} that $\mc{GF}$ is definable over any ring if and only if it is closed under products, and this necessitates the ring being right coherent (for left $R$-modules). By and large, we will be considering $\mc{GF}$ when the ring is Iwanaga-Gorenstein, that is $R\in\mc{I}_{<\infty}$ and is left and right noetherian. In particular $\mc{GF}$ is definable over these rings, and $\mc{P}_{<\infty}=\mc{I}_{<\infty}$. If the injective dimension of $R$ is $n$, we say that $R$ is $n$-Gorenstein.
\end{ex}

In fact, over Gorenstein rings one can say more: the class $\mc{GF}$ is actually finitely accessible, and the finitely presented objects coincide with the finitely presented \ti{Gorenstein projective} modules, where the Gorenstein projective modules are the class $\mc{GP}=\,^{\perp_{1}}\mc{P}_{<\infty}$.

We now recall the key notion of a cotilting module.

\begin{defn}\label{cotiltdef}
An $R$-module $C$ is said to be cotilting provided
\begin{enumerate}
	\item $\t{Ext}_{R}^{i}(C^{\kappa},C)=0$ for all $i\geq 1$ and cardinals $\kappa$;
	\item $C\in\mc{I}_{<\infty}$;
	\item For any injective cogenerator $E$ in $\Mod{R}$, there is an exact sequence $0\to X_{t}\to \cdots \to X_{1}\to X_{0}\to E\to 0$ where $X_{i}\in \t{Prod}(C)$ for all $0\leq i\leq t$.
\end{enumerate}
If the injective dimension of $C$ is equal to $n<\infty$, $C$ is said to be $n$-cotilting.
\end{defn}

Associated to any cotilting module $C$ is a cotorsion pair $(^{\perp_{\infty}}C, (^{\perp_{\infty}}C)^{\perp})$, which will be called the cotorsion pair induced by $C$, while $^{\perp_{\infty}}C$ is the cotilting class associated to, or induced by $C$. 

Given an arbitrary class of modules, the following lemma shows it is possible to determine whether it is a cotilting class (induced by some cotilting module).

\begin{lem}\cite[6.1]{bazz}\label{cotiltclassprops}
Let $\mc{C}$ be a class of $R$-module, then the following are equivalent, where $1\leq n<\omega$:
\begin{enumerate}
\item $\mc{C}$ is an $n$-cotilting class;
\item $\mc{C}$ is definable, resolving and $\mc{C}^{\perp_{\infty}}\subset\mc{I}_{\leq n}$;
\item $\mc{C}$ is definable, resolving and for any exact sequence $0\to X\to C_{n-1}\to\cdots\to C_{1}\to C_{0}$ with $C_{i}\in\mc{C}$, then $X\in\mc{C}$.
\end{enumerate}
\end{lem}

An exact sequence of the form that appears in the \cref{cotiltclassprops}.(3) is called an $n$-\ti{submodule}. 

\begin{ex}\label{gfex}
Returning to the above example, the class $\mc{GF}$ is $n$-cotilting if and only if $R$ is an $n$-Gorenstein ring by \cite[3.4]{aht}. 
\end{ex}

Before turning our attention to the specifics of cotilting theory over commutative noetherian rings, we recall some facts about cotilting that hold in total generality. If $\mc{C}$ is a cotilting class induced by a cotilting module $C$, for each $i\geq 0$ define
\[
\mc{C}_{(i)}:= \,^{\perp_{\infty}}\Omega^{-i}(C)=\{M:\t{Ext}_{R}^{j}(M,C)=0 \t{ for all }j>i\}
\]
where $\Omega^{-i}(C)$ is the $i$-th cosyzygy of $C$, that is the image of $f_{i}:E^{i-1}\to E^{i}$ in a minimal injective resolution of $C$. If $\mc{C}$ is $n$-cotilting, then $\mc{C}_{(i)}$ is $(n-i)$-cotilting for $0\leq i\leq n$ by \cite[15.13]{gt}, and there is a chain of inclusions $\mc{C}=\mc{C}_{(0)}\subseteq \mc{C}_{(1)}\subseteq\cdots\subseteq \mc{C}_{(n-1)}\subseteq \mc{C}_{n}=\Mod{R}$.

From this point on, all rings will be commutative and noetherian. If $M$ is an $R$-module, then a prime $\mf{p}\in\spec{R}$ is \ti{associated} to $M$ if there is an element $m\in M$ such that $\mf{p}=\t{Ann}(M)=\{r\in R:rx=0\}$. Given a class $\mc{A}\subseteq\Mod{R}$, let $\t{Ass}(\mc{A})=\cup_{A\in\mc{A}}\t{Ass}(A)\subset\t{Spec}(R)$, where $\t{Ass}(A)$ is the set of associated primes of $A$. 

For a module $M$ over a commutative noetherian ring, the injective module appearing in the $i$th entry of $M$'s minimal injective resolution is of the form
\[
E^{i}(M)=\bigoplus_{\mf{p}\in\t{Spec}(R)}E(R/\mf{p})^{(\mu_{i}(\mf{p},M))}.
\]
The $\mu_{i}(\mf{p},M)$ are called the \ti{Bass invariants} of $M$, which are determined by the formula
\[
\mu_{i}(\mf{p},M)=\t{dim}_{k(\mf{p})}\t{Ext}_{R}^{i}(R/\mf{p},M)_{\mf{p}};
\]
see \cite[\S 9.2]{rha} for proofs. 

For all $n\geq 0$, the $n$-cotilting classes over commutative noetherian rings were completely classified in \cite{apst}, and the correspondence used in the classification will be of significant utility in the forthcoming sections. The authors define a sequence $(X_{0},\cdots,X_{n-1})$ of subsets of $\t{Spec}(R)$ to be \ti{characteristic} provided
\begin{enumerate}
\item $X_{i}$ is generalisation closed for all $i<n$, that is if $\mf{q}\subset \mf{p}$ with $\mf{p}\in X_{i}$ then $\mf{q}\in X_{i};$
\item $X_{0}\subseteq X_{1}\subseteq \cdots \subseteq X_{n-1}$;
\item $\t{Ass}\,\Omega^{-i}(R)\subseteq X_{i}$ for all $i<n$.
\end{enumerate}
Given a characteristic sequence 
\begin{equation}\label{charseq}
\bm{X}=(X_{0},\cdots,X_{n-1}),
\end{equation} 
define a class of modules
\begin{equation}\label{charseqclass}
\mc{C}_{\bm{X}}=\{M\in \Mod{R}:\mu_{i}(\mf{p},M)=0 \t{ for all $i<n$ and $\mf{p}\in\t{Spec}(R)\setminus X_{i}$}\}.
\end{equation}

Conversely, given an $n$-cotilting class $\mc{C}$, we can consider the sequence of subsets of $\t{Spec}(R)$ given by $(\t{Ass}\,\mc{C}_{(0)},\t{Ass}\,\mc{C}_{(1)},\cdots,\t{Ass}\,\mc{C}_{(n-1)})$.

The classification result is as follows:

\begin{thm}{\cite[16.19]{gt}}\label{bijection}
	The assignments $\bm{X}\mapsto \mc{C}_{\bm{X}}$ and 
	$\mc{C}\mapsto (\mrm{Ass}\,\mc{C}_{(0)},\mrm{Ass}\,\mc{C}_{(1)},\cdots,\mrm{Ass}\,\mc{C}_{(n-1)})$ provide a mutually inverse bijection between $n$-cotilting classes and characteristic sequences in $\mrm{Spec}(R)$.
\end{thm}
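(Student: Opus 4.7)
My plan is to verify that each of the two assignments is well-defined and that they are mutually inverse. The latter reduces, once the Bass-number formula for $\t{Ext}$ is in hand, to establishing the identity that for a cotilting class $\mc{C}$, a module $M$ lies in $\mc{C}_{(i)}$ exactly when $\mu_{j}(\mf{p}, M) = 0$ for every $j \geq i$ and $\mf{p}\notin \t{Ass}\,\mc{C}_{(j)}$. I would use \ref{bazz} as the main structural characterisation throughout.

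For the assignment $\bm{X}\mapsto \mc{C}_{\bm{X}}$ I would verify the three clauses of \ref{bazz}. Definability follows because the identity $\mu_{i}(\mf{p}, M) = \dim_{k(\mf{p})}\t{Ext}^{i}_{R}(R/\mf{p}, M)_{\mf{p}}$ expresses Bass invariants via the vanishing of $\t{Ext}$ groups with finitely presented first argument followed by localisation, and such conditions are preserved by products, directed colimits and pure submodules. Condition (3) of the characteristic sequence gives $R \in \mc{C}_{\bm{X}}$, and together with the chain $X_{0}\subseteq\cdots\subseteq X_{n-1}$ and the long exact sequence of $\t{Ext}$, this yields the resolving property. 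For the containment $\mc{C}_{\bm{X}}^{\perp} \subseteq \mc{I}_{n}$ I would construct an explicit $n$-cotilting module, assembled from shifted and localised injective resolutions of the $R/\mf{p}$ using the data of $\bm{X}$, and invoke \ref{bazz} in the converse direction.

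For the assignment $\mc{C}\mapsto(\t{Ass}\,\mc{C}_{(0)},\ldots,\t{Ass}\,\mc{C}_{(n-1)})$, the chain condition is immediate from $\mc{C}_{(0)}\subseteq\mc{C}_{(1)}\subseteq\cdots$. Each $\mc{C}_{(i)}$ is $(n-i)$-cotilting, hence definable, so to show that $\t{Ass}\,\mc{C}_{(i)}$ is generalisation closed it suffices to produce, given $\mf{p}\in\t{Ass}\,M$ with $M\in\mc{C}_{(i)}$ and $\mf{q}\subsetneq\mf{p}$, another module in $\mc{C}_{(i)}$ having $\mf{q}$ as an associated prime; such a module can be manufactured by exploiting closure of the definable class under the operations encoding localisation at primes. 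Condition (3) would follow by showing $\Omega^{-i}(R)\in\mc{C}_{(i)}$: since $R\in\mc{C}$ (as $\mc{C}$ is resolving), and since over a commutative noetherian ring the indecomposable injective summands $E(R/\mf{p})$ of cosyzygy terms are pure-injective, this proceeds via a chase through the long exact sequences for $\t{Ext}$ into the cotilting module.

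The principal obstacle is the Bass-number characterisation of $\mc{C}_{(i)}$ in the backward direction: showing that the prescribed vanishings of Bass numbers of $M$ suffice to force $\t{Ext}^{j}(M, C) = 0$ for $j > i$. I would approach this by induction on $n - i$, using the cosyzygy short exact sequences of $M$ together with the definability and resolving properties of $\mc{C}_{(j)}$ to reduce to the top of the filtration, where the explicit cotilting module constructed above is brought to bear. Once this equivalence is in place, both composites of the assignments collapse to the identity.
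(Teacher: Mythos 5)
The paper does not prove this theorem: it is quoted verbatim from G\"obel and Trlifaj \cite[16.19]{gt} (the result originates in \cite{apst}), so there is no in-paper argument to compare yours against. Your sketch does roughly mirror the shape of the proof found in those references, but the weight of the argument rests almost entirely on the steps you defer, and one of the steps you do argue has a genuine gap.

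The gap is in your definability argument for $\mc{C}_{\bm{X}}$. You claim that the vanishing of $\mu_{i}(\mf{p},M)=\dim_{k(\mf{p})}\t{Ext}^{i}_{R}(R/\mf{p},M)_{\mf{p}}$ is preserved under direct products because $R/\mf{p}$ is finitely presented. While $\t{Ext}^{i}_{R}(R/\mf{p},-)$ does commute with arbitrary products, \emph{localisation at $\mf{p}$ does not}: for an infinite family $(N_{j})$ with $(N_{j})_{\mf{p}}=0$ for all $j$ one need not have $\left(\prod_{j}N_{j}\right)_{\mf{p}}=0$, since there may be no common element of $R\setminus\mf{p}$ annihilating a given tuple. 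So closure of $\mc{C}_{\bm{X}}$ under products is not immediate from the raw Bass-number description, and your definability step does not go through as stated. In the references this is sidestepped: one constructs a cotilting module $C_{\bm{X}}$ and proves $\mc{C}_{\bm{X}}={}^{\perp_{\infty}}C_{\bm{X}}$, after which definability comes for free from the cotilting structure rather than being checked directly. (The chain condition $X_{0}\subseteq\cdots\subseteq X_{n-1}$ and generalisation-closedness are in fact needed to reformulate the Bass-number vanishing in terms of plain, unlocalised $\t{Ext}$ groups against the $R/\mf{p}$, which \emph{is} a condition closed under products.) Beyond that, your argument for generalisation-closedness of $\t{Ass}\,\mc{C}_{(i)}$ is too vague to assess --- note that $\mf{q}\subsetneq\mf{p}\in\t{Ass}\,M$ does \emph{not} give $\mf{q}\in\t{Ass}\,M_{\mf{q}}$, so ``localisation at primes'' alone does not manufacture the required witness --- and the two items you yourself flag as ``principal obstacles'' (the explicit cotilting module built from cosyzygies of the $R/\mf{p}$, and the Bass-number characterisation of $\mc{C}_{(i)}$) are precisely the technical core of \cite{apst} and remain unexecuted.
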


By unravelling \cref{charseqclass}, we can see that the modules $M$ in $\mc{C}_{\bm{X}}$ are precisely those which have a minimal injective resolution of the form

\begin{align}\label{injres}
\begin{split}
0 \rightarrow & \bigoplus_{\mf{p}\in X_{0}} E(R/\mf{p})^{(\mu_{0}(\mf{p},M))} \rightarrow \bigoplus_{\mf{p}\in X_{1}}E(R/\mf{p})^{(\mu_{1}(\mf{p},M))} \rightarrow \cdots \\
\cdots & \rightarrow\bigoplus_{\mf{p}\in X_{n-1}}E(R/\mf{p})^{(\mu_{n-1}(\mf{p},M))} \rightarrow\bigoplus_{\mf{p}\in \spec{R}}E(R/\mf{p})^{(\mu_{n}(\mf{p},M))}  \rightarrow \cdots
\end{split}
\end{align}

Moreover, given an $n$-cotilting class $\mc{C}$, a prime $\mf{p}$ is in $X_{i}$ if and only if there is an $M\in\mc{C}$ with $\mu_{i}(\mf{p},M)\neq 0$ and $\mu_{j}(\mf{p},M)=0$ for all $j<i$. See, for example \cite[\S4]{sth}.

Given a commutative noetherian ring $R$, an ideal $\mf{a}$ and an $R$-module $M$, the \ti{grade} of $\mf{a}$ on $M$ is defined to be $\t{gr}(\mf{a},M):=\inf\{n\geq 0:\t{Ext}_{R}^{n}(R/\mf{a},M)\neq 0\}$ and say it is $\infty$ if no such integer exists; dually define the \ti{tor-grade} of $\mf{a}$ on $M$ via $\t{t-gr}(\mf{a},M)=\inf\{n\geq 0:\t{Tor}_{n}^{R}(R/\mf{a},M)\neq 0\}$. If $\mf{a}$ is generated by $n$ elements, \cite[6.1.8]{strooker} tells us that $\t{gr}(\mf{a},M)$ is finite if and only if $\t{t-gr}(\mf{a},M)$ is finite, in which case $\t{gr}(\mf{a},M)+\t{t-gr}(\mf{a},M)\leq n$. When $(R,\mf{m})$ is local, then $\t{gr}(\mf{m},M)$ is called the \ti{depth} of $M$ and $\t{t-gr}(\mf{m},M)$ is called the \ti{width} of $M$. 

For a finitely generated module $M$, a classic result, \cite[1.2.5]{bh}, of Rees shows that $\t{gr}(\mf{a},M)$ is nothing other than the common length of all maximal $M$-sequences in $\mf{a}$, where $\bm{x}=x_{1},\cdots,x_{n}\subset \mf{a}$ is an $M$\ti{-sequence} if 

\begin{enumerate}
\item the multiplication map $x_{i}\cdot :M/(x_{1},\cdots,x_{i-1})M\to M/(x_{1},\cdots,x_{i-1})M$ is injective for all $i\leq n$, and 
\item $M/\bm{x}M\neq 0$.
\end{enumerate} 

The notion of an $M$-sequence does not depend on the finite generation of $M$, so we use the same terminology for all modules. If only the first condition of the preceding definition holds, we say $\bm{x}$ is a \ti{weak $M$-sequence}. 

Nakayama's lemma shows that for finitely generated modules weak $M$-sequences coincide with $M$-sequences, but this is not true for arbitrary modules. The following definition is central to our study.

\begin{defn}
Let $(R,\mf{m},k)$ be a commutative noetherian local ring of krull dimension $d$. A finitely generated $R$-module $M$ is \ti{maximal Cohen-Macaulay} if $\t{depth}(M)=d$. If $R$ is a maximal Cohen-Macaulay module over itself, then $R$ is called a Cohen-Macaulay ring.
\end{defn}

We let $\cm{R}$ denote the full subcategory of $\mod{R}$ consisting of all maximal Cohen-Macaulay $R$-modules. An equivalent definition of a finitely generated module being maximal Cohen-Macaulay is that every (and hence any) system of parameters is an $M$-sequence, see \cite[21.9]{eisenbud}. This definition was used by Hochster to introduce the subclass of all modules that will be the centre of our investigation.

\begin{defn}
An $R$-module is a \ti{balanced big Cohen-Macaulay module} if every (or any) system of parameters is an $M$-sequence.
\end{defn}

We will denote the class of balanced big Cohen-Macaulay modules by $\mathsf{BBCM}(R)$. Clearly a finitely generated balanced big Cohen-Macaulay module is nothing other than a maximal Cohen-Macaulay module. 

Of particular interest to us are the Cohen-Macaulay rings that admit a \ti{canonical module} $\Omega$, which is a maximal Cohen-Macaulay module such that 
\[ 
\t{dim}_{k}\,\t{Ext}_{R}^{i}(k,\Omega)=\delta_{i,\t{dim}\,R}.
\]
Such modules are unique up to isomorphism, and a Cohen-Macaulay ring admits a canonical module if and only if it is the homomorphic image of a Gorenstein local ring. In particular Gorenstein local rings admit canonical modules, which are isomorphic to the ring itself. More information, including proofs, can be found at \cite[\S 3]{bh}.

\section{Cotilting with balanced big Cohen-Macaulay modules}
For this section, assume that $(R,\mf{m},k)$ is a Cohen-Macaulay ring of krull dimension $d$ that admits a canonical module $\Omega$. Over such a ring, Holm described the definable closure of $\cm{R}$ in \cite{holm}, and we state his result for reference purposes.

\begin{lem}\cite{holm}\label{holm}
Let $R$ be a Cohen-Macaulay ring with a dualising module $\Omega$. Then the following are equivalent for an $R$-module $M$:
\begin{enumerate}
\item $M\in\lcm{R}$;
\item every system of parameters for $R$ is a weak $M$-sequence;
\item $M$ is a Gorenstein flat module when viewed over the trivial extension $R\ltimes\Omega$;
\item $\mrm{Tor}_{i}^{R}(R/\bm{x},M)=0$ for all $i>0$ and $R$-sequences $\bm{x}$.
\end{enumerate}
\end{lem}

Moreover, \cref{holm}.(4) provides the set of finitely presented functors which yield the definable category $\lcm{R}$.

The following lemma is essentially an immediate corollary to \cref{holm}, by observing that there is a chain of inclusions $\langle\cm{R}\rangle\subseteq \langle\mathsf{BBCM}(R)\rangle\subseteq \lcm{R}$. 

\begin{lem}
The definable closure of the balanced big Cohen-Macaulay modules is $\rlim \cm{R}$.
\end{lem}

It was shown in \cite{bird}, using Holm's result, that the balanced big Cohen-Macaulay modules are precisely the modules in $\rlim \cm{R}$ of finite depth (which is necessarily equal to $d$ as $\t{Ext}_{R}^{i}(k,-)$ preserves direct limits).

For brevity, we define
\[
H_{(i)}=\{\mf{p}\in\spec{R}:\t{height}(\mf{p})\leq i\}.
\]
We are now in a position to state the first result about cotilting and the class $\lcm{R}$.

\begin{thm}\label{bbcmcotilting}
	Let $R$ be a $d$-dimensional Cohen-Macaulay ring admitting a canonical module.
	\begin{enumerate}[label={(\arabic*)}]
		\item The class $\lcm{R}$ is $d$-cotilting,
		\item The characteristic sequence corresponding to $\lcm{R}$ is 
		\[(H_{(0)},H_{(1)},\cdots, H_{(d-1)}).\]
		
		\item The corresponding sequence of cotilting classes is \[([\lcm{R}]_{(0)},[\lcm{R}]_{(1)},\cdots, [\lcm{R}]_{(d-1)}),\] where
		\[[\lcm{R}]_{(i)}=\{M\in\mrm{Mod}(R):\mrm{Tor}_{j}^{R}(R/(\bm{x}),M)=0 \t{ for all $R$-sequences $\bm{x}$ and $j>i$}\}.\]
		
		\item Every cotilting module for $\lcm{R}$ is a balanced big Cohen-Macaulay module.
	\end{enumerate}
\end{thm}

For improved legibility, we partition the proof of the theorem into components corresponding to the constituent parts. This, at times, allows for immediate comment on the relation of the theorem to extant results in the literature. Naturally, we initially prove the first claim.

\begin{proof}[Proof of \cref{bbcmcotilting}.(1)]
As described above, \cref{holm} tells us that $\lcm{R}$ is definable, so by \cref{cotiltclassprops} it is sufficient to show that $\lcm{R}$ is resolving and closed under $d$-submodules. 

Firstly, it is clear from \cref{holm} that the class is extension closed and contains the projective modules, as said modules are Cohen-Macaulay. If $0\to A\to B\to C\to 0$ is a short exact sequence with $B,C\in\lcm{R}$, then by applying $R/\bm{x}\otimes_{R}-$ and noting, again by \cref{holm}, that $\t{Tor}_{i}^{R}(R/\bm{x},C)=0$ for all $i>0$ and $R$-sequences $\bm{x}$, and likewise for $B$, we see that the class is also resolving. 

To show closure under $d$-submodules, consider the trivial extension $R\ltimes \Omega$, which is a Gorenstein local ring, also of Krull dimension $d$, and the associated functors $Z:\Mod{R}\to\Mod{R\ltimes\Omega}$ and $U:\Mod{R\ltimes \Omega}\to \Mod{R}$ as described in \cite{fgr}. By \cref{holm} we may identify $\lcm{R}$ with the $R$-modules $M$ such that $Z(M)\in\mc{GF}(R\ltimes\Omega)$. As illustrated in \cref{gfex}, the class $\mc{GF}(R\ltimes\Omega)$ is cotilting, and it is in fact $d$-cotilting: there is an inclusion $\mc{GP}\subset \mc{GF}$ in $\Mod{R\ltimes\Omega}$ and this induces a reverse inclusion $\mc{GF}^{\perp}\subset \mc{GP}^{\perp}$, but by definition $\mc{GP}^{\perp}=\mc{I}_{d}$ over a $d$-Gorenstein ring. 

In particular, if  $0\to X\to M_{0}\to M_{1}\to \cdots\to M_{d-1}$ is a $d$-submodule with $M_{i}\in\lcm{R}$, then applying the exact functor $Z$ gives a $d$-submodule $0\to Z(X)\to Z(\bm{M})$, with $Z(M_{i})\in\mc{GF}(R\ltimes\Omega)$. Yet, as just shown, $\mc{GF}(R\ltimes\Omega)$ is $d$-cotilting, so is therefore closed under $d$-submodules, hence $Z(X)\in\mc{GF}(R\ltimes\Omega)$, meaning $X\in\lcm{R}$.	
\end{proof}

Before proving the second claim, we note that it yields the associated primes of balanced big Cohen-Macaulay modules - the minimal primes - as well as providing the associated primes of their $0$th to $(d-1)$-st cosyzygies. Equivalently it gives a complete characterisation of balanced big Cohen-Macaulay modules in terms of the indecomposable injective modules which appear in the first $d$ terms of a minimal injective resolution. The associated primes of the balanced big Cohen-Macaulay modules are already known over any commutative noetherian local ring, and they are always minimal due to \cite[2.1]{sharp}. 

\begin{proof}[Proof of \cref{bbcmcotilting}.(2)]
As in \cref{bijection}, let $\bm{X}=(X_{0},\cdots,X_{d-1})$ denote the characteristic sequence corresponding to $\lcm{R}$, so $\lcm{R}=\mc{C}_{\bm{X}}$. We will show that $X_{i}=H_{(i)}$ for all $i<d$ by considering the Bass invariants of the modules in $\lcm{R}$. 

Let us first show that the Bass invariants of $\lcm{R}$ are completely determined by $\cm{R}$; that is if $\mu_{i}(\mf{p},M)\neq 0$ for some $M\in\lcm{R}$ and $\mf{p}\in\spec{R}$, then there is an $M_{0}\in\cm{R}$ with $\mu_{i}(\mf{p},M_{0})\neq 0$. Indeed, any $M\in\lcm{R}$ may be written as the directed colimit of a system $(M_{j})_{J}$ in $\cm{R}$ (the maps of the system are not needed, so are omitted). As $R/\mf{p}$ is finitely generated and localisation preserves direct limits, there are isomorphisms
\[
\t{Ext}_{R}^{i}(R/\mf{p},M)_{\mf{p}}\simeq \rlim_{J}\t{Ext}_{R}^{i}(R/\mf{p},M_{j})_{\mf{p}}
\]
for all $i\geq 0$. In particular, if $\mu_{i}(\mf{p},M)\neq 0$, then there is some $j\in J$ with $\mu_{i}(\mf{p},M_{j})\neq 0$.

Therefore, assume that $M\in\cm{R}$. If $\mf{p}\not\in\t{Supp}\,M$, then $M_{\mf{p}}=0$, so $\mu_{i}(\mf{p},M)=0$ for all $i\geq 0$ since
\[
\t{Ext}_{R}^{i}(R/\mf{p},M)_{\mf{p}}\simeq \t{Ext}_{R_{\mf{p}}}^{i}(k(\mf{p}),M_{\mf{p}})=0.
\]

On the other hand, if $\mf{p}\in\t{Supp}\,M$ then $M_{\mf{p}}$ is a maximal Cohen-Macaulay $R_{\mf{p}}$-module by \cite[2.1.3(b)]{bh}, so $\t{depth}_{R_{\mf{p}}}=\t{dim}\,R_{\mf{p}}=\t{ht}\,\mf{p}$. Yet, also by \cite[2.1.3(b)]{bh}, we have that $\t{grade}(\mf{p},M)=\t{depth}\,M_{\mf{p}}$. Combined, this tells us that $\t{Ext}_{R}^{i}(R/\mf{p},M)=0$ for all $i<\t{ht}\,\mf{p}$, in other words $\mu_{i}(\mf{p},M)=0$ for all $i<\t{ht}\,\mf{p}$. Therefore, for any $\mf{q}\in\spec{R}$, we have that $\mu_{j}(\mf{q},M)=0$ for every $M\in\lcm{R}$ and $j<\t{height}(\mf{q})$.

So suppose that $\mf{p}\in X_{i}$. Then, by the discussion at \cref{injres}, and the preceding part of the proof, there is a maximal Cohen-Macaulay module $M$ with $\mu_{i}(\mf{p},M)\neq 0$. If $\t{height}(\mf{p})>i$, then we must have $\mu_{i}(\mf{p},M)=0$, which is a contradiction, so $\t{height}(\mf{p})\leq i$ and thus $X_{i}\subseteq H_{(i)}$.

In order to show that $X_{i}=H_{(i)}$ it suffices to show that for every $\mf{p}\in H_{(i)}$ there is a module in $\lcm{R}$ such that $E(R/\mf{p})$ is a direct summand of the $i$th term of its minimal injective resolution. As the canonical module $\Omega_{R}$ is faithful, it is supported everywhere, but $(\Omega_{R})_{\mf{p}}\simeq \Omega_{R_{\mf{p}}}$ for all $\mf{p}\in\t{Spec}(R)$ by \cite[3.3.5]{bh}. In particular, if $\t{ht}\,\mf{p}=i$, we see that $\t{Ext}_{R_{\mf{p}}}^{i}(k(\mf{p}),(\Omega_{R})_{\mf{p}})\neq 0$, hence $\mu_{i}(\mf{p},\Omega_{R})\neq 0$. Alternatively, consider the discussion at \cite[Remark 16.13]{gt}. This concludes the proof. 
\end{proof}

We now interpose the proof of \cref{bbcmcotilting} with a corollary of \cref{bbcmcotilting}.(2), relating to the specific case when $R$ is a (not necessarily local) commutative Gorenstein ring. Over local Gorenstein rings, the class of maximal Cohen-Macaulay modules coincides with the class of finitely presented Gorenstein flat modules (which are just the finitely presented Gorenstein projectives) by \cite[11.5.4]{rha}, and therefore $\lcm{R}=\mc{GF}(R)$. In particular, we can use the above theorem to completely identify Gorenstein flat modules in terms of their Bass invariants, even in the non-local case.

\begin{cor}
	Let $R$ be a Gorenstein ring. Then the following are equivalent for an $R$-module $M$:
	\begin{enumerate}[label={(\arabic*)}]
		\item $M$ is a Gorenstein flat $R$-module,
		\item For all $\mf{p}\in\mrm{Spec}(R)$, the Bass invariant $\mu_{i}(\mf{p},M)$ is zero for all $i<\mrm{ht}\,\mf{p}$.
	\end{enumerate}
\end{cor}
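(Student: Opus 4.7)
The strategy is to localise and reduce the equivalence to the local case already handled by Theorem~\ref{cotilting1}. Three standard tools make this possible: (a) Bass numbers localise, i.e.\ $\mu_i(\mf{p},M) = \mu_i(\mf{p}R_{\mf{p}},M_{\mf{p}})$ for every $\mf{p}\in\spec{R}$; (b) for a commutative Gorenstein ring $R$, the localisation $R_{\mf{p}}$ is a Gorenstein local ring of dimension $\t{ht}\,\mf{p}$; and (c) over a Gorenstein local ring $S$ one has $\cm{S} = S\t{-Gproj}$, so by the discussion in the example of Section~2 the definable closure $\lcm{S}$ coincides with $\t{GFlat}(S)$.

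\textbf{Forward direction $(1)\Rightarrow(2)$.} Assume $M$ is Gorenstein flat over $R$. I would first verify that $M_{\mf{p}}$ is Gorenstein flat over $R_{\mf{p}}$: any injective $R_{\mf{p}}$-module $E'$ is a direct sum of indecomposables $E(R/\mf{q})$ with $\mf{q}\subseteq\mf{p}$, hence is already injective as an $R$-module, and the standard identity $\t{Tor}_{i}^{R}(E',M)\simeq\t{Tor}_{i}^{R_{\mf{p}}}(E',M_{\mf{p}})$ transfers the Tor-vanishing. By (b) and (c), Theorem~\ref{cotilting1}(2) applied to $R_{\mf{p}}$ then forces $\mu_{i}(\mf{q}R_{\mf{p}},M_{\mf{p}}) = 0$ for all $i<\t{ht}\,\mf{q}$. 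Using (a) and letting $\mf{p}$ range over $\spec{R}$ yields (2).

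\textbf{Reverse direction $(2)\Rightarrow(1)$.} Assume the Bass vanishing holds for $M$. By (a) the same vanishing holds for each $M_{\mf{p}}$ over $R_{\mf{p}}$, so Theorem~\ref{cotilting1}(2) combined with (c) places $M_{\mf{p}}$ in $\t{GFlat}(R_{\mf{p}})$ for every $\mf{p}$. To deduce that $M$ itself is Gorenstein flat, I would check that $\t{Tor}_{i}^{R}(E,M)=0$ for all $i\geq 1$ and every injective $R$-module $E$. Decomposing $E$ as a direct sum of indecomposables $E(R/\mf{p})$ and using that Tor commutes with direct sums reduces the task to showing $\t{Tor}_{i}^{R}(E(R/\mf{p}),M)=0$ for each $\mf{p}$. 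Since $E(R/\mf{p})$ is naturally an $R_{\mf{p}}$-module, one has $\t{Tor}_{i}^{R}(E(R/\mf{p}),M)\simeq\t{Tor}_{i}^{R_{\mf{p}}}(E(R/\mf{p}),M_{\mf{p}})$, and this vanishes by Gorenstein flatness of $M_{\mf{p}}$.

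\textbf{Main obstacle.} The delicate point is the bookkeeping in the localisation--globalisation step, namely (i) that Gorenstein flatness is preserved under localisation and (ii) that Tor-vanishing against each indecomposable injective $E(R/\mf{p})$ suffices to conclude Gorenstein flatness globally. Both rely on the compatibility $\t{Tor}_{i}^{R}(X,-)\simeq \t{Tor}_{i}^{R_{\mf{p}}}(X,(-)_{\mf{p}})$ when $X$ is already an $R_{\mf{p}}$-module and on the structure theorem for injectives over a noetherian ring; once these are in place, Theorem~\ref{cotilting1}(2) does the essential work at each prime.
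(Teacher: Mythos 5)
Your proof is correct and follows the same route as the paper: localise at each prime, invoke $\mu_i(\mf{p},M)=\mu_i(\mf{p}R_\mf{p},M_\mf{p})$, observe that $R_\mf{p}$ is a Gorenstein local ring with $\t{GFlat}(R_\mf{p})=\lcm{R_\mf{p}}$, and apply Theorem~\ref{cotilting1}(2). The one genuine difference is that you actually prove the locality of Gorenstein flatness (via the structure theorem for injectives over a noetherian ring and the compatibility $\t{Tor}_i^R(X,M)\simeq\t{Tor}_i^{R_\mf{p}}(X,M_\mf{p})$ for $X$ an $R_\mf{p}$-module), whereas the paper simply asserts this as a known fact; this is a worthwhile addition since the claim is not entirely trivial, though you should note that it suffices to check Tor-vanishing against injectives because modules of finite injective dimension reduce to injectives by dimension shifting.
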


\begin{proof}
Let us first observe that $M$ is a Gorenstein flat $R$-module if and only if $M_{\mf{p}}$ is a Gorenstein flat $R_{\mf{p}}$-module for all prime ideals $\mf{p}$. To show the first implication, let $M$ be a Gorenstein flat $R$-module and $\mf{p}$ be a prime ideal. For each $i\geq 0$ there is an equality $\mu_{i}(\mf{p},M)=\mu_{i}(\mf{p}R_{\mf{p}},M_{\mf{p}})$ by \cite[9.2.1]{rha}. Yet $M_{\mf{p}}$ is a Gorenstein flat $R_{\mf{p}}$-module, and since $R_{\mf{p}}$ is a Gorenstein local ring $\mc{GF}(R_{\mf{p}})=\rlim\cm{R_{\mf{p}}}$ so the implication follows from \cref{bbcmcotilting}.(2).

For the reverse implication, assume the Bass in variants of $M$ are as in the statement. We will show that $M_{\mf{p}}$ is a Gorenstein flat $R_{\mf{p}}$-module for every $\mf{p}\in\t{Spec}(R)$. If $\mf{q}$ is a prime ideal of $R$ contained in $\mf{p}$ then $(R\setminus\mf{p})\cap\mf{q}=\varnothing$, so there is an equality $\mu_{i}(\mf{q},M)=\mu_{i}(\mf{q}_{\mf{p}},M_{\mf{p}})$ for every $i\geq 0$ by \cite[9.2.1]{rha}; in particular, $\mu_{i}(\mf{q}_{\mf{p}},M_{\mf{p}})=0$ for all $i<\t{ht}\,\mf{q}$. Yet the prime ideals in $R_{\mf{p}}$ are precisely the localisations of the prime ideals of $R$ contained in $\mf{p}$, so for every $\mf{q}_{\mf{p}}\in\t{Spec}(R)$ we see $\mu_{i}(\mf{q}_{\mf{p}},M_{\mf{p}})=0$ for all $i<\t{ht}\,\mf{q}_{\mf{p}}=\t{ht}\,\mf{q}$. Since $R_{\mf{p}}$ is a Gorenstein local ring, it is Cohen-Macaulay and we can use the characteristic sequence in \cref{bbcmcotilting}.(2) to see that $M_{\mf{p}}$ is in $\rlim\cm{R_{\mf{p}}}$, so is a Gorenstein flat $R_{\mf{p}}$-module. 
Since $\mf{p}$ was arbitrary, it holds at every $\mf{p}\in\t{Spec}(R)$, so $M$ is a Gorenstein flat $R$-module.
\end{proof}

The result in the above corollary is not new, and it can be deduced from \cite[2.3.13]{christensen}. However, the tools of cotilting provide an alternative proof, hence the reason for inclusion. It is conceivable that there are non-Gorenstein commutative noetherian rings $R$ such that the Gorenstein flat $R$-modules have the Bass invariants given in the above corollary. Indeed, $R$ itself would be a Gorenstein flat module, so the localisation $R_{\mf{p}}$ would be a Cohen-Macaulay local ring at each prime ideal. However, since the above result cannot determine $\mu_{i}(\mf{p},R)$ for $i>\t{ht}\,\mf{p}$, it is not immediately deducible that $R_{\mf{p}}$ has finite injective dimension. 

Let us now return to the proof of the remaining two parts of the theorem.

\begin{proof}[Proof of \cref{bbcmcotilting}.(3)]
We proceed by induction on $i$. When $i=0$ the result is immediate from the characterisation of $\lcm{R}$. For induction assume that
\[
[\lcm{R}]_{(i)}=\{M\in\Mod{R}:\t{Tor}_{j}^{R}(R/(\bm{x}),M)=0 \t{ for all $R$-sequences $\bm{x}$ and $j>i$}\}.
\]
Let $M$ be an $R$-module and assume $M\in [\lcm{R}]_{(i+1)}$. Consider the canonical short exact sequence $0\to \Omega^{1}(M)\to P\to M\to 0$ with $P$ projective, so $P\in [\lcm{R}]_{(i)}$ as it is a cotilting class. By \cite[16.14]{gt} we see $\Omega^{1}(M)\in[\lcm{R}]_{(i)}$ as well. By dimension shifting, for every $R$-sequence $\bm{x}$ and $k\geq 1$ there is an isomorphism
\[
\t{Tor}_{k}^{R}(R/(\bm{x}),\Omega^{1}(M))\simeq \t{Tor}_{k+1}^{R}(R/(\bm{x}),M),
\]
hence $\t{Tor}_{i+1+\lambda}^{R}(R/(\bm{x}),M)=\t{Tor}_{i+\lambda}^{R}(R/(\bm{x}),\Omega^{1}(M))=0$ for every $\lambda>0$ by the induction hypothesis. Conversely, suppose $\t{Tor}_{j}^{R}(R/(\bm{x}),M)=0$ for all $j>i+1$ and $R$-sequences $\bm{x}$. By a similar dimension shifting argument we see that $\t{Tor}_{j}^{R}(R/(\bm{x}),\Omega^{1}(M))=0$ for all $j>i$, that is $\Omega^{1}(M)\in[\lcm{R}]_{(i)}$ by the inductive hypothesis. We may therefore apply \cite[16.14]{gt} again to see that $M\in[\lcm{R}]_{(i+1)}$, which proves the claim.
\end{proof}

The functors $\t{Tor}_{j}^{R}(R/\bm{x},-)$ are finitely presented for all $j\geq 0$ as the modules $R/\bm{x}$ are finitely presented for all $R$-sequence $\bm{x}$ by \cite[10.2.36]{PSL}. 

\begin{proof}[Proof of \cref{bbcmcotilting}.(4)]
Let $C$ be a cotilting module with $^{\perp_{\infty}}C=\lcm{R}$. As $C$ is cotilting, it is in $\lcm{R}$, where every module in $\lcm{R}$ has depth either equal to $\t{dim}\,R$ or $\infty$; as mentioned above, $\mathsf{BBCM}(R)=\{M\in\lcm{R}:\t{depth}(M)=\t{dim}(R)\}$. Suppose that $C$ has infinite depth, in other words $\t{Ext}_{R}^{i}(k,C)=0$ for all $i\geq 0$. Then $k$ is certainly in $^{\perp_{\infty}}C$, and is therefore a finitely presented object in $\lcm{R}$, in other words a maximal Cohen-Macaulay module. But this is absurd, since the depth of $k$ is zero as $\Hom_{R}(k,k)\simeq k$. Therefore $C$ has depth equal to $d$, so is in $\mathsf{BBCM}(R)$.
\end{proof}

As a remark, there is a step-by-step process on how to construct cotilting modules given a cotilting class in \cite{sth}. If one follows this procedure, applications of the depth lemma \cite[9.1.2.(e)]{bh}, and noting that $E(k)$ is the only indecomposable injective $R$-module of finite depth - it has zero depth - one also obtains the existence of a cotilting balanced big Cohen-Macaulay module that induces $\lcm{R}$.

\section{Depth and cotilting}

As mentioned at the start of the previous section, the class $\lcm{R}$ is the definable closure of $\cm{R}$ whenever $R$ admits a canonical module (the question of whether $\lcm{R}$ is definable over general Cohen-Macaulay rings is still open). However, there is a natural alternative definable extension of $\cm{R}$, given by the class of modules of depth at least $d=\t{dim}\,R$. Certain differences between these two classes were discussed in \cite{bird}. We will now see how cotilting, or rather the characteristic sequence associated to a cotilting class, provides an alternative way to consider distinctions between these classes, as well as providing a way to `measure' how much they differ. Before formalising this, we introduce the following notation: for $i\geq 0$ define the classes
\[
\mc{D}_{i}=\{M\in\Mod{R}:\t{depth}(M)\geq i\}=\{M\in\Mod{R}:\t{Ext}_{R}^{j}(k,M)= 0 \t{ for }j<i\}.
\]
Each $\mc{D}_{i}$ is clearly definable. We will let $\pspec{R}$ denote the punctured spectrum, that is $\spec{R}\setminus\{\mf{m}\}$. From now we will assume that $R$ is an arbitrary Cohen-Macaulay ring, not necessarily with a canonical module. Somewhat unsurprisingly, given the above discussion, we have the following theorem that is a complete analogue of \cref{bbcmcotilting}.

\begin{thm}\label{depthcotilting}
Let $R$ be an arbitrary Cohen-Macaulay ring of dimension $d$. 
\begin{enumerate}[label={(\arabic*)}]
		\item $\mc{D}_{d}$ is $d$-cotilting.
		\item The characteristic sequence for $\mc{D}_{d}$ is $(\pspec{R},\pspec{R},\cdots,\pspec{R})$.
		\item The sequence of cotilting classes corresponding to $\mc{D}_{d}$ is
		\[
		(\mc{D}_{d},\mc{D}_{d-1},\cdots,\mc{D}_{1}).
		\]
		\item Set $\mc{E}_{0}=\mrm{Add}\,\{E(R/\mf{p}):\mf{p}\in\pspec{R}\}.$ Then a cotilting module inducing $\mc{D}_{d}$ is
		\[
		\Omega_{\mc{E}_{0}}^{d}(E(k)),
		\]
		the $d$th syzygy of a minimal resolution of $E(k)$ with respect to the class $\mc{E}_{0}$. Moreover, this module has depth $d$.
	\end{enumerate}
\end{thm} 

\begin{proof}[Proof of \cref{depthcotilting}.(1)]
It is clear that $\mc{D}_{d}$ is definable, extension closed and contains $R$ as we assumed the ring was Cohen-Macaulay. Closure under kernels of epimorphisms follows from the depth lemma. We show it is also closed under $d$-submodules. Suppose
\[
\begin{tikzcd}
	0 \arrow[r] & X\arrow[r, "f_{0}"] & M_{0}\arrow[r, "f_{1}"] & M_{1}  \arrow[r] &\cdots \arrow[r] &M_{d-2}\arrow[r, "f_{d-1}"] &M_{d-1}
\end{tikzcd} 
\]
is a $d$-submodule with each $M_{i}\in\mc{D}_{d}$. This exact sequence decomposes into $d$ short exact sequences $0\to \t{im}\,f_{i}\to M_{i}\to \t{coker}\,f_{i}\to 0$, and the depth lemma shows that $\t{depth}\,\t{im}\,f_{i}\geq \t{depth}\,\t{coker}\,f_{i}+1$ for all $i$, as $\t{depth}\,M_{i}\geq d$. Yet $\t{coker}\,f_{i}\simeq \t{im}\,f_{i+1}$ for all $0\leq i<d-1$, and therefore by iterating, we have $\t{depth}\, X = \t{depth}\,\t{im}\,f_{0}\geq \t{depth}\,\t{coker}\,f_{d-1}+d$. Consequently $X\in\mc{D}_{d}$, so the class is $d$-cotilting as claimed.
\end{proof}

The proof also shows that each of the classes $\mc{D}_{i}$ is $i$-cotilting for all $0<i\leq d$. 
Before we prove \cref{depthcotilting}.(2), let us prove \cref{depthcotilting}.(3). What it shows, that $(\mc{D}_{d})_{(i)}=\mc{D}_{d-i}$, enables an easier proof of \cref{depthcotilting}.(2).

\begin{proof}[Proof of \cref{depthcotilting}.(3)]
This is very similar to the proof of \cref{bbcmcotilting}.(3), and it follows from the observation that $(\mc{D}_{d})_{(i)}=\mc{D}_{d-i}$, which can be shown by dimension shifting and induction on $i$.
\end{proof}

We now prove \cref{depthcotilting}.(2).

\begin{proof}[Proof of \cref{depthcotilting}.(2)]

Let $\bm{X}_{\mc{D}_{d}}=(X_{0},\cdots,X_{d-1})$ denote the characteristic sequence for $\mc{D}_{d}$, as detailed in \cref{bijection}. By \cref{depthcotilting}.(3), we know $(\mc{D}_{d})_{(i)}=\mc{D}_{d-i}$, so we may apply \cite[Lemma 16.15(a)]{gt}, which tells us that $\mf{p}\in X_{i}$ if and only if $E(R/\mf{p})\in \mc{D}_{d-i}$. Now, if $\mf{p}$ is a prime ideal, there are only two possibilities for the depth of $E(R/\mf{p})$: it is either zero or infinite. Let $k\to E(R/\mf{p})$ be a homomorphism, which, by the property of injective hulls, factors through $E(k)$. But by \cite[3.3.8(4)]{rha}, we have $\Hom_{R}(E(k),E(R/\mf{p}))\neq 0$ if and only if $\mf{p}=\mf{m}$. Consequently for any $\mf{p}\in \pspec{R}$, the depth of $E(R/\mf{p})$ is infinite, and is therefore in $\mc{D}_{i}$ for all $0\leq i\leq n$. Since $X_{i}=\t{Ass}(\mc{D}_{d-i})$, we see that $\pspec{R}\subseteq X_{i}$ for each $1\leq i\leq d$. Yet this is an equality, since if it were not, we would have $E(k)\in X_{i}$ for some $1\leq i\leq d$, which would tell us that $E(k)$ has depth at least $1$. We have just seen this is not possible, hence $X_{i}=\pspec{R}$.

\end{proof}

This shows that over Cohen-Macaulay rings, the class $\mc{D}_{i}$ is the largest non-trivial  $i$-cotilting class in $\Mod{R}$. Indeed, if $\mc{C}$ is an $i$-cotilting class with characteristic sequence $\bm{X}_{\mc{C}}=(X_{0},\cdots,X_{i})$, then none of the $X_{i}$ can equal to $\spec{R}$. In particular, each $X_{i}\subseteq \pspec{R}$. Hence we must have $\mc{C}\subset\mc{D}_{d}$. 

This is in contrast to the case of $\lcm{R}$ when $R$ admits a canonical module, which is the smallest $d$-cotilting class. Indeed, if $\mc{C}$ is any $d$-cotilting class, with characteristic sequence $\bm{X}_{\mc{C}}$ as in \cref{charseq}, then $X_{i}$ must contain all the associated primes of $\Omega^{-i}(R)$, which are precisely the height $i$ primes. In particular, we see $H_{(i)}\subseteq X_{i}$, for each $i$, so by the discussion at \cref{injres} it follows that $\lcm{R}\subseteq\mc{C}$.  

We can use the characteristic sequences given in \cref{bbcmcotilting}.(2) and \cref{depthcotilting}.(2) to see that there are usually a great many $d$-cotilting classes contained between $\lcm{R}$ and $\mc{D}_{d}$, let alone definable classes.

\begin{prop}
Let $R$ be a $d$-dimensional Cohen-Macaulay ring with canonical module $\Omega$. Then there are finitely many $d$-cotilting classes containing $\cm{R}$ if and only if $d=1$.
\end{prop}
\begin{proof}
	One direction is trivial: if $d=1$ then $\lcm{R}=\mc{D}_{1}$ by \cite[3.8]{bird} hence there is a single cotilting class containing $\cm{R}$, which is $\lcm{R}$. For the other direction, we prove the $d=2$ case, which is easily generalised to higher dimension. By \cite[144]{kap}, there are infinitely many height one primes, and let $\mf{p}$ be one of them. The sequence of subsets of $\spec{R}$ given by $(H_{(0)}\cup\{\mf{p}\}, H_{(1)})$ is characteristic. Indeed we know $(H_{(0)},H_{(1)})$ is characteristic by \cref{bbcmcotilting}, so $\t{Ass}\,\Omega^{-i}(R)\subseteq H_{(i)}$ for $i=0,1$, and therefore these associated primes are also in $H_{(0)}\cup\{\mf{p}\}$ and $H_{(1)}$ respectively. The only remaining condition that requires any comment is showing that $H_{(0)}\cup \{\mf{p}\}$ is generalisation closed. But since the only primes contained in $\mf{p}$ are of height zero, they are trivially also in $H_{(0)}\cup\{\mf{p}\}$. For higher dimensions, pick a prime $\mf{p}$ of given height and considers its specialisation closure $\overline{\mf{p}}=\{\mf{q}\in\spec{R}:\mf{q}\subset\mf{p}\}$. Then the sequence which has in its $i$th position $H_{(i)}\cup\overline{\mf{p}}$ is characteristic.
\end{proof}
This shows that there are an abundance of cotilting classes between $\lcm{R}$ and $\mc{D}_{d}$. 

\begin{rem}
There are alternative ways to measure the `size' of the disparity between $\lcm{R}$ and $\mc{D}_{d}$, although they are much less immediate than by comparing characteristic sequences. 

There is a correspondence between definable subcategory of $\Mod{R}$ and Serre subcategories of $\mod{\mod{R}}$, the category of functors $\mod{R}\to\tb{Ab}$ which preserve direct limits and direct products. The correspondence sends a definable category $\mc{X}$ to the Serre subcategory
\[
\{F\in\mod{\mod{R}}:\overset{\rightarrow}{F}X=0 \t{ for all }X\in\mc{X}\},
\]
where $\overset{\rightarrow}{F}$ is the unique functor $\Mod{R}\to\tb{Ab}$ extending $F$ and commuting with direct limits. The reverse direction sends a Serre subcategory $\mc{S}$ to $\{X\in\Mod{R}:\overset{\rightarrow}{F}X=0 \t{ for all }F\in\mc{S}\}$. 

If $\mc{S}_{1}$ and $\mc{S}_{2}$ are the Serre subcategories corresponding to the cotilting classes $\lcm{R}$ and $\mc{D}_{d}$ respectively, then there is a reverse inclusion $\mc{S}_{2}\subseteq\mc{S}_{1}$, where the inclusion is strict whenever $\t{dim}(R)>1$. For any functor $F\in\mc{S}_{1}\setminus\mc{S}_{2}$, there is a definable category corresponding to the Serre subcategory generated by $\mc{S}_{2}\cup\{F\}$. This will be a definable category lying between $\lcm{R}$ and $\mc{D}_{d}$, although there is no reason for it to be cotilting.

We can also consider the definable quotient category, which is the definable category corresponding to the small abelian category $\mc{S}_{1}/\mc{S}_{2}$. This category contains information about the discrepancy in the indecomposable pure injectives between $\mc{D}_{d}$ and $\lcm{R}$ (we have already partially considered this by investigating the indecomposable injective objects in each class). The downside to this approach is that the definable quotient is not a definable subcategory of $\Mod{R}$. Instead it is just the definable subcategory of some finitely accessible category with products. See \cite{krause} for more details on definable quotients. 

\end{rem}

We now return to the final section of the proof of \cref{depthcotilting}.

\begin{proof}[Proof of \cref{depthcotilting}.(3)]
This is very similar to the proof of \cref{bbcmcotilting}.(3), and it follows from the observation that $(\mc{D}_{d})_{(i)}=\mc{D}_{d-i}$, which can be shown by dimension shifting and induction on $i$.
\end{proof}

\begin{proof}[Proof of \cref{depthcotilting}.(4)]
	Firstly, any cotilting module for $\mc{D}_{d}$ must be of finite depth for the same reasons as in \cref{bbcmcotilting}.(4). To show that the module as described is a cotilting module, we follow the procedure in \cite{sth}. In brief, this states that there is a cotilting module inducing $\mc{D}_{d}$ of the form
	\[C\simeq\prod_{\spec{R}}C(\mf{p})\]
	where $\mc{C}(\mf{p}):=E(R/\mf{p})$ if $\mf{p}\in\t{Ass}\,\mc{D}_{d}=\pspec{R}$ and $C(\mf{m})$ arises in the following exact sequence
	\[
	\begin{tikzcd}
		0\arrow{r} & C(\mf{m})\arrow{r} & E_{0} \arrow{r}{\phi_{0}}&E_{1}\arrow{r}&\cdots\arrow{r}&E_{d-2}\arrow{r}{\phi_{d-2}}& E_{d-1} \arrow{r}{\phi_{d-1}} & E(k)\arrow{r} & 0
	\end{tikzcd}
	\]
	where $\phi_{d-1}:E_{d-1}\to E(k)$ is an $\mc{E}_{0}$-cover, $\phi_{i}:E_{i}\to E_{i+1}$ is an $\mc{E}_{0}$-cover of $\t{ker}\,\phi_{i+1}$ for all $i\leq d-2$ and $C(\mf{m})=\t{ker}\,\phi_{0}$. Yet this is the same as saying that $C(\mf{m})$ is the $d$th syzygy of a minimal $\mc{E}_{0}$ resolution of $E(k)$. Consequently $C$ is the direct sum of $\prod_{\pspec{R}}E(R/\mf{p})$ and $C(\mf{m})$, but the product of injectives is injective, and therefore redundant to the cotilting structure. Hence $C(\mf{m})=\Omega_{\mc{E}_{0}}^{d}(E(k))$ is a cotilting module.
\end{proof}

\section{Some phenomena over Gorenstein rings}
There is a weaker notion than a cotilting module, that of a \ti{partial cotilting module}. As opposed to satisfying all three conditions of \cref{cotiltdef}, a partial cotilting module only satisfies the first two - that is $X$ is a partial cotilting module if it has finite injective dimension and $\t{Ext}_{R}^{i}(X^{\kappa},X)=0$ for all cardinals $\kappa$ and $i>0$. Such a module is said to have a \ti{complement} if there is a module $Y$ such that $X\oplus Y$ is a cotilting module. 

\begin{ex}
	If $R$ is a complete local Gorenstein ring, then $R$ itself is a partial cotilting module: it trivially has finite injective dimension from the Gorenstein assumption, while as $R$ is noetherian it is certainly coherent and thus $R^{\kappa}$ is a flat module for every cardinal $\kappa$. Yet as $R$ is complete it is pure injective as it is matlis reflexive, and therefore  it is also cotorsion. Consequently $\t{Ext}_{R}^{1}(R^{\kappa},R)=0$. In this case the class $^{\perp_{\infty}} R$ is nothing other than $\mc{D}_{d}$, where $d=\t{dim}\,R$ (this follows from local duality \cite[3.5.8]{bh}). In particular $^{\perp_{\infty}}R$ is a cotilting class.
\end{ex}

We now formalise the above example. Note that all rings are once again assumed to be commutative and noetherian.

\begin{lem}\label{GFlatInclusion}
Let $R$ be a Gorenstein ring. If $X$ is a partial cotilting and pure-injective $R$-module, then there is always an inclusion $\mc{GF}(R)\subseteq\,^{\perp_{\infty}}X$. Moreover, if $X$ is flat, it admits a complement.
\end{lem}

\begin{proof}
Since $X$ has finite injective dimension \cite[10.2.6(3)]{rha} tells us that $\t{Ext}_{R}^{i}(M,X)=0$ for all $i\geq 1$ and finitely generated Gorenstein projective modules $M$. As $X$ was assumed to be pure-injective there are isomorphisms
\[
\t{Ext}_{R}^{i}(\rlim_{J}N_{j},X)\simeq \llim_{J}\t{Ext}_{R}^{i}(N_{j},X)\]
for every directed system $(N_{j},f_{ij})_{J}$ of $R$-modules by \cite[6.28]{gt}. In particular, we have $^{\perp_{\infty}}X$ closed under direct limits, hence $\rlim\mc{GP}^{\t{fp}}=\mc{GF}$ is contained in $^{\perp_{\infty}}X$. For the latter claim, note that since $R$ is Gorenstein \cite[6.1]{emm} shows that $R$ has finite global $\mc{GF}$-dimension. Moreover, as $\mc{GF}$ is definable, it contains $\t{Prod}(X)$ since $X$ is assumed to be flat. $\mc{GF}$ is also resolving. We may therefore apply \cite[6.1]{ks} to see that $X$ admits a complement.
\end{proof}

In general it need not be the case that, given a partial cotilting module $X$ with complement $Y$, the classes $^{\perp_{\infty}}X$ and $^{\perp_{\infty}}(X\oplus Y)$ coincide. For this, one needs the class $^{\perp_{\infty}}X$ to be closed under products (see \cite[6.3]{ks}), as in the case of the above example. We note that conditions for the existence of complements for cotilting modules were also considered in \cite[p.93]{ahc}, as the dual question to the existence of a complement for a partial tilting module.

\begin{thm}\label{flatcotilting}
Let $R$ be a commutative Gorenstein ring. Then
\[
F=\prod_{\mf{p}\in\spec{R}}\widehat{R_{\mf{p}}}
\]
is a cotilting module for $\mc{GF}(R)$, where $\widehat{R_{\mf{p}}}$ denotes the $\mf{p}$-adic completion of $R_{\mf{p}}$.
\end{thm}

Before giving the proof, note that $\widehat{R_{\mf{p}}}$ is a flat pure-injective $R$-module that is isomorphic to $\Hom_{R_{\mf{p}}}(E(\kappa(\mf{p})),E(\kappa(\mf{p})))$, and every flat and pure-injective $R$-module is a product of completions of free $R_{\mf{p}}$-modules. This can be seen in \cite[6.7]{rha}. Moreover, over such rings any flat and pure-injective module is partial cotilting for the same reason as given in the above example. In particular, we see that $\mc{GF}(R)\subseteq \,^{\perp_{\infty}}F$ for any flat and pure-injective module $F$. As we will see in the proof, it is possible for such a module to be a cotilting, not just partial cotilting, module, if and only if $\widehat{R_{\mf{p}}}$ appears as a summand of the flat module for every prime $\mf{p}$.

\begin{proof}[Proof of \cref{flatcotilting}]
Let us first show that $^{\perp_{\infty}}F=\mc{GF}(R)$. It is enough to show the inclusion $^{\perp_{\infty}}F\subseteq\mc{GF}(R)$ by \cref{GFlatInclusion}. Suppose that $M$ is in $^{\perp_{\infty}}F$; this occurs if and only if 
\[
\prod_{\mf{p}}\t{Ext}_{R}^{i}(M,\widehat{R_{\mf{p}}})=0 \t{ if and only if } \t{Ext}_{R}^{i}(M,\widehat{R_{\mf{p}}})=0\t{ for all prime ideals }\mf{p}.
\]
Yet $E(R/\mf{p})$ is an injective cogenerator in $\Mod{R_{\mf{p}}}$, and there are isomorphisms
\begin{align*}
\t{Ext}_{R}^{i}(M,\widehat{R_{\mf{p}}})&\simeq \t{Ext}_{R}^{i}(M,\Hom_{R_{\mf{p}}}(E(\kappa(\mf{p}))E(\kappa(\mf{p}))) \\
&\simeq \Hom_{R_{\mf{p}}}(\t{Tor}_{i}^{R}(M,E(R/\mf{p})),E(\kappa(\mf{p}))),
\end{align*}
where the second isomorphism can be found at \cite[2.16]{gt} and the isomorphism of $R$-modules $E(R/\mf{p})\simeq E(\kappa(\mf{p}))$ is used. In particular, we see that the first module is zero if $\t{Tor}_{i}^{R}(M,E(R/\mf{p}))=0$ for every prime ideal $\mf{p}$. Yet as $R$ is Gorenstein and commutative, it follows that $\t{Tor}_{i}^{R}(M,E)=0$ for every injective module, hence $M\in\mc{GF}(R)$ and $^{\perp_{\infty}}F=\mc{GF}(R)$. By the above discussion, to finish the proof it suffices to show the third condition of \cref{cotiltdef}. Let $F_{i}=\prod_{\t{ht}\,\mf{p}=i}\widehat{R_{\mf{p}}}$, so $F=\oplus_{0\leq i\leq \t{dim }R}F_{i}$. The description of a minimal flat resolution of injective modules over a Gorenstein ring can be found at \cite[\S 5.3]{xu}; in particular, if $E$ is an injective cogenerator for $\Mod{R}$ with minimal flat resolution $\cdots\to G^{1}\to G^{0}\to E\to 0$, then $G^{i}\in\t{Prod}(F_{i})$ by \cite[5.3.1]{xu}, and consequently each $G^{i}$ is in $\t{Prod}(F)$. This shows that the third condition of \cref{cotiltdef} holds, which finishes the proof.
\end{proof}

\begin{rem}
In a private correspondence with Michal Hrbek, I was informed that a forthcoming preprint authored him, J. \v{S}\v{t}ov\'{\i}\v{c}ek and T. Nakamura also contains the above theorem, obtained completely independently from this work, and using different techniques.   
\end{rem}

Suppose that, given any ring $R$, one has a definable category $\mc{D}$ of $R$-modules. There is a unique definable category of $\Mod{R^{\circ}}$, called the \ti{dual definable category} of $\mc{D}$, and denoted $\mc{D}^{d}$, given by the property that $\Hom_{\mbb{Z}}(M,\qz)\in\mc{D}^{d}$ if and only if $M\in\mc{D}$, and that $\mc{D}^{dd}=\mc{D}$. For example, over Gorenstein rings the classes $\mc{GF}$ and $\mc{GI}$ are both definable, and $(\mc{GF})^{d}=\mc{GI}$.  Over commutative noetherian rings, every cotilting class is the dual definable category of a tilting class, see, for example, \cite[16.21]{gt}. 

The dual notion to cotilting is tilting. An $R$-module $T$ (over any ring) is \ti{tilting} if it has finite projective dimension, $\t{Ext}_{R}^{i}(T,T^{(\lambda)})=0$ for all $i>0$ and cardinal $\lambda$, and there is an exact sequence $0\to R\to T_{0}\to\cdots\to T_{r}\to 0$, where $T_{i}\in\t{Add}(T)$. Dually to the cotilting case, there is a cotorsion pair $(\,^{\perp_{\infty}}(T^{\perp_{\infty}}),T^{\perp_{\infty}})$, and the class $T^{\perp_{\infty}}$ is called the \ti{tilting class} associated to $T$. As shown in \cite[4.2]{fintype}, all tilting classes are definable, and if $T$ is tilting with tilting class $\mc{T}$, then $\Hom_{\mbb{Z}}(T,\qz)$ is a cotilting module, whose cotilting class $\mc{C}$ is the dual definable category of $\mc{T}$. For example, in the above theorem, the cotilting module $F$ is necessarily of the form $\Hom_{R}(T,E)$, where $E$ is an injective cogenerator in $\Mod{R}$ and $T$ is a tilting module. As $F$ is flat $T$ must be injective, and the tilting class it induces is the Gorenstein injective $R$-modules.

Recall that if $\mf{a}$ is an ideal of a commutative noetherian ring $R$, then the $i$-th local cohomology functor with support in $\mf{a}$ is defined to be 
\[
H_{\mf{a}}^{i}(-):=\rlim_{t\geq 0}\t{Ext}_{R}^{i}(R/\mf{a}^{t},-).
\]

\begin{lem}
	Let $R$ be a complete Gorenstein local ring of dimension $d$. If $C$ is a cotilting module for $\mc{GF}(R)$, then $H_{\mf{m}}^{d}(C)$ is a partial tilting module which is Gorenstein injective. 
\end{lem}

\begin{proof}
From the completeness assumption on $R$, local duality \cite[3.5.8]{bh} gives an isomorphism $\Hom_{R}(M,R)\simeq\Hom_{R}(H_{\mf{m}}^{d}(M),E(k))$ for all $R$-modules $M$. By \cite[3.9]{bird}, if $M$ is a Gorenstein flat module, then so is $\Hom_{R}(M,R)$, and therefore $H_{\mf{m}}^{d}(M)$ is a Gorenstein injective module by properties of dual definable categories. Suppose that $C$ is a cotilting module for $\mc{GF}(R)$, which necessarily has $H_{\mf{m}}^{d}(C)\neq 0$ by \cref{bbcmcotilting}, then $H_{\mf{m}}^{d}(C)$, and all coproducts of it, have finite injective dimension by \cite[2.5]{zz}, and as $R$ is Gorenstein this is equivalent to it (and all its coproducts) having finite projective dimension. Moreover, $\mc{GI}$ is closed under arbitrary coproducts as it is definable, and therefore $\t{Ext}_{R}^{i}(H_\mf{m}^{d}(C),H_{\mf{m}}^{d}(C)^{(\kappa)})=0$ for all cardinals $\kappa$ by \cite[11.2.2]{rha}. This completes the proof.
\end{proof}

We now relate the objects in $H_{\mf{m}}^{d}(C)^{\perp_{\infty}}$ with those in $^{\perp_{\infty}}C$. We let $(-)^{+}=\Hom_{R}(-,E(k))$ denote the usual Matlis duality functor.

\begin{lem}\label{GInjInclusion}
Suppose $R$ is a complete local Gorenstein ring and $C$ is a cotilting module for $\mc{GF}$. If $M$ is an $R$-module, then $M\in\,^{\perp_{\infty}}C$ (that is $M$ is Gorenstein flat) if and only if $M^{+}\in H_{\mf{m}}^{d}(C)^{\perp_{\infty}}$. In particular, $H_{\mf{m}}^{d}(C)^{\perp_{\infty}}$ contains $\mc{GI}$.
\end{lem}

\begin{proof}
	Let $M$ be an $R$-module. There is an isomorphism $H_{\mf{m}}^{d}(C)\simeq C\otimes_{R}E(k)$ by, for example, \cite[9.7]{24}; in particular, as $C$ is Gorenstein flat we have $C\otimes_{R}E(k)\simeq C\otimes_{R}^{\tb{L}}E(k)$ in $\mathsf{D}(R)$. This gives isomorphisms
\begin{align*}
	\t{Ext}_{R}^{i}(H_{\mf{m}}^{d}(C),\Hom_{R}(M,E(k))) & \simeq H_{-i}\,\tb{R}\Hom_{R}(C\otimes_{R}^{\tb{L}}E(k),\tb{R}\Hom_{R}(M,E(k))) \\
	&\simeq H_{-i}\,\tb{R}\Hom_{R}(M,\tb{R}\Hom_{R}(C\otimes_{R}^{\tb{L}}E(k),E(k))) \\
	&\simeq H_{-i}\,\tb{R}\Hom_{R}(M,\tb{R}\Hom_{R}(\tb{R}\Hom_{R}(C,E(k)),E(k)))\\
	&\simeq \t{Ext}_{R}^{i}(M,C^{++})
\end{align*}
Now, suppose that $M$ is Gorenstein flat. Then $M=\rlim_{j}M_{j}$ for a directed system $\{M_{j}\}_{J}$ of finitely generated Gorenstein projective modules. 

As the modules of finite injective dimension are definable, $C^{++}$ also has finite injective dimension, and thus $\t{Ext}_{R}^{i}(M_{j},C^{++})=0$ for all $i\geq 0$ by \cite[10.2.6]{rha}. Yet as $C^{++}$ is pure injective, we have
\[
\t{Ext}_{R}^{i}(M,C^{++})\simeq \llim_{J}\t{Ext}_{R}^{i}(M_{j},C^{++})=0
\]
for all $i>0$.

Thus $M\in\mc{GF}(R)$ gives $M^{+}\in H_{\mf{m}}^{d}(C)^{\perp_{\infty}}$. On the other hand, if $M^{+}\in H_{\mf{m}}^{d}(C)^{\perp_{\infty}}$, we have $\t{Ext}_{R}^{i}(M,C^{++})=0$ by the above isomorphisms. But $0\to C\to C^{++}$ is a pure-embedding with $C$ pure-injective, hence this is split. Consequently we have $\t{Ext}_{R}^{i}(M,C)=0$ for all $i>0$ as well, so $M$ is Gorenstein flat. 
\end{proof}
\begin{rem}
The above lemma shows that $(\mc{GF},H_{\mf{m}}^{d}(C)^{\perp_{\infty}})$ is a duality pair in the sense of \cite{hj}. Note that $(\mc{GF},\mc{GI})$ is also a duality pair, albeit one with significantly more structure.
\end{rem}

One may naturally wonder when the class $H_{\mf{m}}^{d}(C)^{\perp_{\infty}}$ coincides with the Gorenstein injective modules. This most certainly happens if $X\in H_{\mf{m}}^{d}(C)^{\perp_{\infty}}$ implies $X^{++}\in H_{\mf{m}}^{d}(C)^{\perp_{\infty}}$, since then we can deduce that $X^{+}\in\mc{GF}$ and thus $X\in\mc{GI}$. This occurs if, for example, $H_{\mf{m}}^{d}(C)^{\perp_{\infty}}$ is closed under pure submodules. In general, it is not clear that the dual of an object in $H_{\mf{m}}^{d}(C)^{\perp_{\infty}}$ is Gorenstein flat, that is in $^{\perp_{\infty}}C$. 

Having considered local cohomology, it is also natural to consider the canonical dual functor $\Hom_{R}(-,R)$, which is an endofunctor on $\mc{GF}(R)$, and $R$ is an injective cogenerator in the class of balanced big Cohen-Macaulay $R$-modules, as shown in \cite{bird}. 

\begin{thm}\label{cancot}
	Let $R$ be a complete Gorenstein local ring. If $C$ is a cotilting module inducing $\mc{GF}(R)$, then $C^{*}:=\Hom_{R}(C,R)$ is a partial cotilting module with complement, $Y$, such that $^{\perp_{\infty}}(C^{*}\oplus Y)=\,^{\perp_{\infty}}C=\mc{GF}(R)$. 
\end{thm}

\begin{proof}
Since $H_{\mf{m}}^{d}(C)$ is partial tilting, it has finite projective dimension, and thus by local duality $\Hom_{R}(C,R)\simeq\Hom_{R}(H_{\mf{m}}^{d}(C),E(k))$ has finite injective dimension. For the Ext-vanishing condition, recall from \cite[3.9]{bird} that $C^{*}$ is also a Gorenstein flat module. As $\mc{GF}(R)$ is definable $(C^{*})^{\kappa}$ is also Gorenstein flat and can therefore be expressed as $\rlim_{J}X_{j}$ for some finitely presented Gorenstein projective modules $X_{j}$. Then by the, at this stage familiar, isomorphisms $\t{Ext}_{R}^{i}(\rlim_{J}X_{j},C^{*})\simeq \llim_{J}\t{Ext}_{R}^{i}(X_{j},C^{*})$ from \cite[6.28]{gt}, it follows from \cite[11.5.9]{rha} that these Ext modules vanish, as $C^{*}$ has finite flat dimension. Thus $C^{*}$ is partial cotilting. We now show the latter claims. For this, it is enough to show that $^{\perp_{\infty}}C^{*}=\mc{GF}$, as the definability of $\mc{GF}$ yields the remaining claims by \cite[6.3]{ks}. Since $\Hom_{R}(C,R)\simeq \tb{R}\Hom_{R}(C,R)$ in $\mathsf{D}(R)$, we have the following isomorphisms in $\mathsf{D}(R)$:
\begin{align*}
\tb{R}\Hom_{R}(X,\tb{R}\Hom_{R}(C,R)) & \simeq \tb{R}\Hom_{R}(X,\tb{R}\Hom_{R}(H_{\mf{m}}^{d}(C),E(k))) \t{ by local duality }\\
&\simeq \tb{R}\Hom_{R}(H_{\mf{m}}^{d}(C),\tb{R}\Hom_{R}(X,E(k))) \t{ by adjunction }.
\end{align*}
Therefore $\t{Ext}_{R}^{i}(X,C^{*})=0$ for all $i>0$ if and only if $\t{Ext}_{R}^{i}(H_{\mf{m}}^{d}(C),X^{+})=0$ for all $i>0$. In particular, we see that $X\in\,^{\perp_{\infty}}C^{*}$ if and only if $X\in \mc{GF}$ by \cref{GInjInclusion}, which is what we required.
\end{proof}

\bibliographystyle{plain}
\bibliography{reference}

\begin{thebibliography}{10}

\bibitem{ahc}
Lidia Angeleri-H\"{u}gel and Fl\'{a}vio~Ulhoa Coelho.
\newblock Infinitely generated complements to partial tilting modules.
\newblock {\em Math. Proc. Cambridge Philos. Soc.}, 132(1):89--96, 2002.

\bibitem{aht}
Lidia Angeleri~H\"{u}gel, Dolors Herbera, and Jan Trlifaj.
\newblock Tilting modules and {G}orenstein rings.
\newblock {\em Forum Math.}, 18(2):211--229, 2006.

\bibitem{apst}
Lidia Angeleri~H\"{u}gel, David Posp\'{\i}\v{s}il, Jan
  \v{S}\v{t}ov\'{\i}\v{c}ek, and Jan Trlifaj.
\newblock Tilting, cotilting, and spectra of commutative {N}oetherian rings.
\newblock {\em Trans. Amer. Math. Soc.}, 366(7):3487--3517, 2014.

\bibitem{bazz}
Silvana Bazzoni.
\newblock When are definable classes tilting and cotilting classes?
\newblock {\em J. Algebra}, 320(12):4281--4299, 2008.

\bibitem{fintype}
Silvana Bazzoni and Jan \v{S}\v{t}ov\'{\i}\v{c}ek.
\newblock All tilting modules are of finite type.
\newblock {\em Proc. Amer. Math. Soc.}, 135(12):3771--3781, 2007.

\bibitem{bird}
Isaac Bird.
\newblock Two definable subcategories of maximal {C}ohen-{M}acaulay modules.
\newblock {\em J. Pure Appl. Algebra}, 224(6):106250, 22, 2020.

\bibitem{bh}
Winfried Bruns and J\"{u}rgen Herzog.
\newblock {\em Cohen-{M}acaulay rings}, volume~39 of {\em Cambridge Studies in
  Advanced Mathematics}.
\newblock Cambridge University Press, Cambridge, 1993.

\bibitem{christensen}
Lars~Winther Christensen.
\newblock {\em Gorenstein dimensions}, volume 1747 of {\em Lecture Notes in
  Mathematics}.
\newblock Springer-Verlag, Berlin, 2000.

\bibitem{CB}
William Crawley-Boevey.
\newblock Locally finitely presented additive categories.
\newblock {\em Comm. Algebra}, 22(5):1641--1674, 1994.

\bibitem{eisenbud}
David Eisenbud.
\newblock {\em Commutative algebra}, volume 150 of {\em Graduate Texts in
  Mathematics}.
\newblock Springer-Verlag, New York, 1995.
\newblock With a view toward algebraic geometry.

\bibitem{emm}
Ioannis Emmanouil.
\newblock On the finiteness of {G}orenstein homological dimensions.
\newblock {\em J. Algebra}, 372:376--396, 2012.

\bibitem{rha}
Edgar~E. Enochs and Overtoun M.~G. Jenda.
\newblock {\em Relative homological algebra. {V}olume 1}, volume~30 of {\em De
  Gruyter Expositions in Mathematics}.
\newblock Walter de Gruyter GmbH \& Co. KG, Berlin, extended edition, 2011.

\bibitem{fgr}
Robert~M. Fossum, Phillip~A. Griffith, and Idun Reiten.
\newblock {\em Trivial extensions of abelian categories}.
\newblock Lecture Notes in Mathematics, Vol. 456. Springer-Verlag, Berlin-New
  York, 1975.
\newblock Homological algebra of trivial extensions of abelian categories with
  applications to ring theory.

\bibitem{gt}
R\"{u}diger G\"{o}bel and Jan Trlifaj.
\newblock {\em Approximations and endomorphism algebras of modules. {V}olume
  1}, volume~41 of {\em De Gruyter Expositions in Mathematics}.
\newblock Walter de Gruyter GmbH \& Co. KG, Berlin, extended edition, 2012.
\newblock Approximations.

\bibitem{holm}
Henrik Holm.
\newblock The structure of balanced big {C}ohen-{M}acaulay modules over
  {C}ohen-{M}acaulay rings.
\newblock {\em Glasg. Math. J.}, 59(3):549--561, 2017.

\bibitem{hj}
Henrik Holm and Peter J{\o}rgensen.
\newblock Cotorsion pairs induced by duality pairs.
\newblock {\em J. Commut. Algebra}, 1(4):621--633, 2009.

\bibitem{24}
Srikanth~B. Iyengar, Graham~J. Leuschke, Anton Leykin, Claudia Miller, Ezra
  Miller, Anurag~K. Singh, and Uli Walther.
\newblock {\em Twenty-four hours of local cohomology}, volume~87 of {\em
  Graduate Studies in Mathematics}.
\newblock American Mathematical Society, Providence, RI, 2007.

\bibitem{JL}
Christian~U. Jensen and Helmut Lenzing.
\newblock {\em Model-theoretic algebra with particular emphasis on fields,
  rings, modules}, volume~2 of {\em Algebra, Logic and Applications}.
\newblock Gordon and Breach Science Publishers, New York, 1989.

\bibitem{kap}
Irving Kaplansky.
\newblock {\em Commutative rings}.
\newblock The University of Chicago Press, Chicago, Ill.-London, revised
  edition, 1974.

\bibitem{krause}
Henning Krause.
\newblock Exactly definable categories.
\newblock {\em J. Algebra}, 201(2):456--492, 1998.

\bibitem{ks}
Henning Krause and \O~yvind Solberg.
\newblock Filtering modules of finite projective dimension.
\newblock {\em Forum Math.}, 15(3):377--393, 2003.

\bibitem{lw}
Graham~J. Leuschke and Roger Wiegand.
\newblock {\em Cohen-{M}acaulay representations}, volume 181 of {\em
  Mathematical Surveys and Monographs}.
\newblock American Mathematical Society, Providence, RI, 2012.

\bibitem{PSL}
Mike Prest.
\newblock {\em Purity, spectra and localisation}, volume 121 of {\em
  Encyclopedia of Mathematics and its Applications}.
\newblock Cambridge University Press, Cambridge, 2009.

\bibitem{sharp}
R.~Y. Sharp.
\newblock Cohen-{M}acaulay properties for balanced big {C}ohen-{M}acaulay
  modules.
\newblock {\em Math. Proc. Cambridge Philos. Soc.}, 90(2):229--238, 1981.

\bibitem{strooker}
Jan~R. Strooker.
\newblock {\em Homological questions in local algebra}, volume 145 of {\em
  London Mathematical Society Lecture Note Series}.
\newblock Cambridge University Press, Cambridge, 1990.

\bibitem{ss}
Jan \v{S}aroch and Jan \v{S}\v{t}ov\'{\i}\v{c}ek.
\newblock Singular compactness and definability for {$\Sigma$}-cotorsion and
  {G}orenstein modules.
\newblock {\em Selecta Math. (N.S.)}, 26(2):Paper No. 23, 40, 2020.

\bibitem{sth}
Jan \v{S}\v{t}ov\'{\i}\v{c}ek, Jan Trlifaj, and Dolors Herbera.
\newblock Cotilting modules over commutative {N}oetherian rings.
\newblock {\em J. Pure Appl. Algebra}, 218(9):1696--1711, 2014.

\bibitem{xu}
Jinzhong Xu.
\newblock {\em Flat covers of modules}, volume 1634 of {\em Lecture Notes in
  Mathematics}.
\newblock Springer-Verlag, Berlin, 1996.

\bibitem{zz}
Majid~Rahro Zargar and Hossein Zakeri.
\newblock On injective and {G}orenstein injective dimensions of local
  cohomology modules.
\newblock {\em Algebra Colloq.}, 22(Special Issue 1):935--946, 2015.

\end{thebibliography}

\end{document}